
\documentclass{article}



\usepackage{amsfonts,amsmath,amssymb,amsthm}
\usepackage{booktabs}  

\newtheorem{theorem}{Theorem}
\newtheorem{lemma}{Lemma}
\newtheorem{proposition}{Proposition}


\title{Effective dimension of some weighted pre-Sobolev spaces
with dominating mixed partial derivatives}
\author{Art B. Owen\\Stanford University}
\date{December 2018}



\usepackage{amsopn}





\newcommand{\natu}{\mathbb{N}}

\newcommand{\real}{\mathbb{R}}

\newcommand{\mrd}{\mathrm{d}}
\newcommand{\rd}{\,\mathrm{d}}

\renewcommand{\le}{\leqslant}
\renewcommand{\ge}{\geqslant}
\renewcommand{\emptyset}{\varnothing}

\newcommand{\bsa}{{\boldsymbol{a}}}

\newcommand{\bsx}{\boldsymbol{x}}
\newcommand{\bsy}{\boldsymbol{y}}
\newcommand{\bsz}{\boldsymbol{z}}

\newcommand{\bsgamma}{{\boldsymbol{\gamma}}}

\newcommand{\bszero}{\boldsymbol{0}}

\newcommand{\ca}{\mathcal{A}}
\newcommand{\cb}{\mathcal{B}}
\newcommand{\cf}{\mathcal{F}}

\newcommand{\phz}{\phantom{0}}
\newcommand{\hk}{\mathrm{HK}}
\newcommand{\e}{\mathbb{E}}


\begin{document}

\maketitle

\begin{abstract}
This paper considers two notions of effective dimension for  
quadrature in weighted pre-Sobolev spaces
with dominating mixed partial derivatives.
We begin by finding a ball in those spaces just barely
large enough to contain a function with unit variance.
If no function in that ball has more than $\varepsilon$
of its variance from ANOVA components involving 
interactions of order $s$ or more, then the space
has effective dimension at most $s$ in the superposition sense.
A similar truncation sense notion replaces the cardinality
of the ANOVA component by the largest index it contains.
Some Poincar\'e type inequalities  
are used to bound variance components by multiples  
of these space's squared norm and those  
in turn provide bounds on effective dimension.  
Very low effective dimension in the superposition  
sense holds for some spaces defined by product weights  
in which quadrature is strongly tractable. 
The superposition dimension is 
$O( \log(1/\varepsilon)/\log(\log(1/\varepsilon)))$
just like the superposition dimension used in the multidimensional
decomposition method.  Surprisingly, 
even spaces where all subset weights are equal, regardless  
of their cardinality or included indices, have low superposition  
dimension in this sense. 
This paper does not require periodicity of the integrands.
\end{abstract}

\par\noindent
{\bf Keywords:}
Poincar\'e inequalities, Quasi-Monte Carlo, Tractability

\smallskip
\par\noindent
{\bf AMS Categories:}
41A55, 65D30, 45E35

\section{Introduction}

This paper gives upper bounds for the 
effective dimension of certain weighted 
pre-Sobolev spaces of functions on $[0,1]^d$.
They are pre-Sobolev spaces because the requisite derivatives must exist
as continuous functions and not just in the distributional sense.
The notion of effective dimension used here is the
one in the technical report \cite{effdim-tr}.  That work was based on a Fourier
expansion and it required a periodicity condition
on certain partial derivatives of the functions $f$.  
In this paper, no periodicity assumption is required.

The weighted spaces we consider are used as models
for certain numerical integration problems.  
There we seek $\mu =\int_{[0,1]^d}f(\bsx)\rd\bsx$
where $1\le d<\infty$, and $d$
might be large.  For large $d$, integration becomes quite
hard for worst case integrands, even smooth ones, yielding
a curse of dimensionality described by Bakhvalov \cite{bakh:1959}.
Quasi-Monte Carlo (QMC) integration \cite{nied92,dick:pill:2010},
often succeeds in high dimensions despite the curse.
This can be explained by the functions having
less than full $d$-dimensional complexity.
Weighted Sobolev spaces \cite{hick:1996,sloa:wozn:1998}
provide one model for such reduced complexity.  
This paper translates the weights defining those spaces into bounds 
on certain $L^2$ norms quantifying the notion that those spaces 
of integrands are `effectively' $s$ dimensional where $s$ might be much less than $d$.

The functions we consider have a partial derivative
taken once with respect to each of $d$ coordinates,
and moreover, that partial derivative is a continuous function
on $[0,1]^d$.  That is sufficient smoothness to
place them in the weighted Sobolev spaces mentioned above.
Such weighted spaces have been used to model
settings in which higher order interactions 
\cite{hick:1996}, or successive dimensions, 
or both \cite{sloa:wozn:1998} are less
important. When these high order or high index components
decay quickly enough, the result is a 
set of functions that evades the curse of dimensionality
established by \cite{bakh:1959}.

The contribution of various high order or high index parts of
an integrand to the Monte Carlo (MC) variance can be quantified through
the analysis of variance (ANOVA) decomposition defined below.
Suppose that  an integrand $f$ has less than one percent of its variance
coming from high dimensional components, and that
another quadrature method proves to be far more accurate than plain MC.
That improvement cannot be 
attributed to better handling of the high dimensional parts,
because they caused at most one percent of the squared error for Monte Carlo.
The improvement must have come from superior handing of
the low dimensional aspects of $f$.

In this paper we investigate some senses in which
a whole space of functions is of low effective dimension.
We look at some weighted Sobolev norms 
(and some semi-norms) and define measures 
of the extent to which balls in those normed spaces 
are dominated by their low dimensional parts with respect 
to an ANOVA decomposition. 
We select a ball in which the worst case Monte Carlo
variance is unity, and then consider whether {\sl any}
integrand in that ball has meaningfully large variance
coming from its high dimensional components.
If not, then the space itself is said to have low effective dimension.

An outline of the paper is as follows.
Section~\ref{sec:notation} gives our notation, introducing
weighted pre-Sobolev spaces with some conditions on the weights
and some decompositions (ANOVA and anchored) of $L^2\bigl([0,1]^d\bigr)$.
Section~\ref{sec:literature} presents a survey of the
notion of effective dimension, dating back to 1951.
Section~\ref{sec:poincare} introduces some Poincar\'e type inequalities
that we use to lower bound the Sobolev squared norms in terms of
ANOVA components.
Section~\ref{sec:effdim} defines what it means for a ball
in a space of functions to have a given effective dimension, i.e., to lack
meaningfully large higher dimensional structure in any of
its functions.
Section~\ref{sec:bounds} gives upper bounds on the
effective dimension of a space in terms of its weights
under  monotonicity conditions that give smaller
weights to higher order and higher indexed subsets of
variables. Explicit effective dimension bounds are worked out
and tabulated.  
One surprise in this work is that giving every subset 
equal weight, regardless of its cardinality or the size 
of the indices it contains, still leads to modest superposition dimension. 
Section~\ref{sec:conclusions} has some conclusions.

To finish this section, we mention some related work
in addition to that covered in Section~\ref{sec:literature}.
Poincar\'e inequalities have been used in the global 
sensitivity analysis (GSA) literature to bound Sobol' indices.  See 
for instance \cite{sobo:kuch:2009,sobo:kuch:2010}
and \cite{lamb:ioos:pope:gamb:2012}. 
Roustant et al.\ \cite{rous:frut:ioos:kunh:2014} use Poincar\'e inequalities 
based on mixed partial derivatives to bound some 
superset variable importance measures in GSA. 

\section{Notation}\label{sec:notation}

The indices of $\bsx\in[0,1]^d$ are $j\in\{1,2,\dots,d\}\equiv 1{:}d$. 
For  $u\subseteq 1{:}d$, we use $|u|$ for its cardinality 
and $-u$ for its complement with respect to $1{:}d$. 
For $u\subseteq1{:}d$, the point $\bsx_u\in[0,1]^{|u|}$
consists of the components $x_j$ for $j\in u$. 
For $i=1,\dots,n$, the point $\bsx_{i,u}\in[0,1]^{|u|}$ has those components
from the point $\bsx_i\in[0,1]^d$.
The hybrid point $\bsy=\bsx_u{:}\bsz_{-u}$ 
has $y_j=x_j$ for $j\in u$ and $y_j=z_j$ for $j\not\in u$. 

The differential $\mrd\bsx_u$ is $\prod_{j\in u}\mrd x_j$. 
Similarly, $\partial^u f$ denotes $\partial^{|u|} f/\prod_{j\in u} \partial x_j$,
and by convention $\partial^\emptyset f$ is $f$. 
The functions we consider belong to 
\begin{align}\label{eq:itscts}
\cf = \{f:[0,1]^d\to\real\mid \text{$\partial^{1{:}d} f$ is continuous on $[0,1]^d$}\}. 
\end{align}
Continuity here allows the partial derivatives to be taken in 
any order and it allows some interchanges of order 
between differentiation and integration. 
Our space is smaller than the Sobolev spaces usually
studied, so we call it a pre-Sobolev space.
A Sobolev space also includes functions with derivatives in the sense of distributions
which then implies completeness~\cite{meye:serr:1964}.
We need the somewhat stronger condition~\eqref{eq:itscts} in order to apply
a Poincar\'e inequality.  Many applications have an integrand that satisfies~\eqref{eq:itscts}.

\subsection{Weighted spaces}

Let $\gamma_u>0$ for all $u\subseteq1{:}d$ and 
let  $\bsgamma$ comprise all of those choices for $\gamma_u$. 
We use an unanchored norm  defined by 
\begin{align}\label{eq:gamnorm}
\Vert f\Vert^2_\bsgamma 
=\sum_{u\subseteq1{:}d}\frac1{\gamma_u}
\int_{[0,1]^{|u|}}\biggl(\int_{[0,1]^{d-|u|}}\partial^u f(\bsx)\rd\bsx_{-u}
\biggr) ^2 \rd\bsx_u. 
\end{align}
See \cite{dick:kuo:sloa:2013} for background on this and a 
related anchored norm. 
This norm is finite for every choice of $\bsgamma$ and all $f\in\cf$. 
Some of our results allow a semi-norm instead found by dropping 
$u=\emptyset$ from the sum, or equivalently, taking $\gamma_\emptyset=\infty$. 
More generally, the $u=\emptyset$ term is $\mu^2/\gamma_\emptyset$
where $\mu = \int f(\bsx)\rd\bsx$. 
There are now many efficient methods of constructing 
quasi-Monte Carlo point sets for weighted spaces. 
See \cite{sloa:kuo:joe:2002}, \cite{nuyu:cool:2006}
and \cite{nuyu:cool:2006:nonp}. 

Numerous choices of weights are given in the survey \cite{dick:kuo:sloa:2013},
and a comprehensive treatment is available in~\cite{nova:wozn:II}. 
Sloan and Wo\'zniakowski \cite{sloa:wozn:1998} use product weights 
$\gamma_u=\prod_{j\in u}\gamma_j$ including $\gamma_\emptyset=1$. 
Typically $1=\gamma_1\ge\gamma_2\ge\cdots\ge\gamma_{j}\ge\gamma_{j+1}\cdots>0$. 
Hickernell \cite{hick:1996} uses weights $\gamma_u = \gamma^{|u|}$ for some $\gamma\in(0,1)$. 
Such weights are commonly called order weights. 
The more general order weights in \cite{dick2006good} 
take the form $\gamma_u = \Gamma_{|u|}$
where $\Gamma_r\ge0$ is a nonincreasing function of $r$. 
The case with $\Gamma_r=0$ for all $r\ge r_0$, known as finite-order 
weights, was studied in \cite{sloa:wang:wozn:2004}. 
However, Sloan \cite{sloa:2007} points out a danger from this choice. 
We will suppose that $\Gamma_r>0$. 
Dick et al.\ \cite{dick2006good}  also consider completely general weights $\gamma_u$
but such generality sharply raises the costs 
of using those weights to design an algorithm \cite{dick:kuo:sloa:2013}. 
Product and order weights, known as POD weights, defined by 
$\gamma_u = \Gamma_{|u|}\prod_{j\in u}\gamma_j$ have proved useful 
in QMC based algorithms for solving PDEs with random coefficients 
\cite{kuo2012quasi}. 
One useful choice has $\gamma_u = (|u|!)^\alpha\times\prod_{j\in u}j^{-\beta}$
where $0<\alpha<\beta$. Note that with this choice, $\Gamma_{|u|} = (|u|!)^\alpha$
is increasing in $|u|$. 

Higher weights are placed on the more important 
subsets and other things being equal, subsets with fewer 
components are considered more important as are subsets 
containing components with lower indices. 

We partially order subsets by $|u|$ and we use a parallel notation 
$\lceil u\rceil = \max\{j\mid j\in u\}$ to order subsets 
by their largest element. By convention, $\lceil\emptyset\rceil=0$. 
Two frequently satisfied conditions on the weights are:
\begin{align}
|u| \ge s &\implies \gamma_u \le \gamma_{1:s},\quad\text{and},
\label{eq:mincard}\\
\lceil u\rceil \ge s & \implies \gamma_u \le \gamma_{\{s\}}. \label{eq:minindex}
\end{align}
For instance, \eqref{eq:mincard} holds if the highest weighted subset of
cardinality $s$ is $1{:}s$ and the associated weight $\gamma_{1:s}$ is non-increasing in $s$.
Similarly, ~\eqref{eq:mincard} holds if the highest weighted subset with maximal element $s$
is $\{s\}$ and the associated weights are non-increasing in $s$.

Condition~\eqref{eq:minindex} 
implies that $\gamma_u \le \gamma_{\{1\}}$ for all $u\ne\emptyset$. 
This also holds for the POD weights described above. 
Regarding $1/\gamma_u$ as a multiplicative penalty factor,
the singleton $\{1\}$ is the `least penalized' index subset, though 
it may not be uniquely least penalized. 
Many but not all weights in use satisfy 
$\gamma_u\ge\gamma_v$ when $u\subseteq v$. 

Most of the widely studied weights satisfy both~\eqref{eq:mincard}
and~\eqref{eq:minindex}. 
The POD weights $\gamma_u=(|u|!)^\alpha\times\prod_{j\in u}j^{-\beta}$ 
are an exception. 
For those weights, $\gamma_{u\cup \{1\}}>\gamma_u$ whenever 
$1\not\in u$. 

\subsection{Function decompositions}

This section gives notation for two function decompositions.
The first is the ANOVA decomposition 
of $L^2\bigl([0,1]^d\bigr)$.
The second is an anchored decomposition.

The ANOVA decomposition was introduced independently 
in \cite{hoef:1948} and \cite{sobo:1969}.
The decomposition extends naturally to any mean square integrable
function of $d$ independent random inputs, and
$d=\infty$ is allowed \cite{lss}.
There is additional background in \cite{sobomat}. 
In this decomposition 
$f(\bsx) = \sum_{u\subseteq 1{:}d}f_u(\bsx)$ where $f_u$
depends on $\bsx$ only through $\bsx_u$. 
The ANOVA uniquely satisfies $\int_0^1f_u(\bsx)\rd x_j=0$
whenever $j\in u$. 
For $u\ne\emptyset$, define the variance component 
$\sigma^2_u = \int_{[0,1]^d} f_u(\bsx)^2\rd\bsx$,
and take $\sigma^2_\emptyset =0$. 
Then the variance of $f$ decomposes as 
$$\sigma^2\equiv \int_{[0,1]^d} (f(\bsx)-\mu)^2\rd\bsx 
=\sum_{u\subseteq 1{:}d}\sigma^2_u.$$

A useful alternative to the ANOVA is
the anchored decomposition, $f(\bsx) = \sum_uf^*_u(\bsx)$,
from \cite{sobo:1969}.
See \cite{kuo:sloa:wasi:wozn:2010} for a unified presentation
of this and other decompositions.
One picks an anchor point $\bsa\in[0,1]^d$
and then $f^*_u(\bsx)$ depends on $\bsx$ only through the values of
$f(\bsx_v{:}\bsa_{-v})$ for $v\subseteq u$, and if $x_j=a_j$ for any $j\in u$
then $f^*_u(\bsx)=0$.
For instance $f^*_\emptyset(\bsx)=f(\bsa)$,
while for $j\in 1{:}d$,
$f^*_{\{j\}}(\bsx) = f(\bsx_{\{j\}}{:}\bsa_{-\{j\}})-f(\bsa)$
and for $1\le j<k\le d$, 
$$f^*_{\{j,k\}}(\bsx) = f(\bsx_{\{j,k\}}{:}\bsa_{-\{j,k\}})
-f(\bsx_{\{j\}}{:}\bsa_{-\{j\}})
-f(\bsx_{\{k\}}{:}\bsa_{-\{k\}})
+f(\bsa).$$
The generalization to larger $|u|$
is given in \cite{kuo:sloa:wasi:wozn:2010}.
The anchored decomposition is especially useful for integrands
with large or infinite $d$ where any function evaluations can only
change finitely many inputs from a default value which then serves
as a natural anchor. 
The function $f^*_u$ can be evaluated through $O(2^{|u|})$ evaluations of $f$.
In some cost models, the cost to evaluate  $f(\bsx)$
increases with the number of components $x_j$ that differ from $a_j$.

\section{Literature on effective dimension}\label{sec:literature}

This section is a brief survey of the literature on effective dimension. 
Effective dimension is used to explain how QMC 
can be superior to Monte Carlo (MC) in high dimensional  integration problems.
It can be used as a post-mortem to explain why QMC did or did not
bring an improvement in a problem.  Also, other things being equal,
reducing the effective dimension using methods such as those
in \cite{acbrogla97,imai:tan:2002} is expected to improve QMC performance.
Finally, the approach from \cite{kuo2010liberating} that is now called
the multidimensional decomposition method (MDM)
uses notions of effective dimension to devise algorithms for integration
and approximation.

The notion of effective dimension seems to be as old as QMC itself. 
QMC as we know it was introduced in 1951 by Richtmyer~\cite{rich:1951} working at 
Los Alamos. 
He used what we now call Kronecker sequences to study neutron chain reactions.  
Here is what he said about effective dimension on page 13:
\begin{quotation}
The peculiarity of the integrands in question is that although $k$ may be large (e.g., $>20$) and, in fact, is really indefinite, as noted above, the effective number of 
dimensions is smaller (say, $4$ or $5$). 
(The effective number of dimensions may be defined as follows:
at point $(x_1,x_2,\ldots,x_k)$ let $s$ be the smallest integer such 
that $f(x_1,x_2,\ldots,x_s,x'_{s+1},\ldots,x'_k)$ is independent of $x'_{s+1},\ldots,x'_k$. 
Then the average of $s$ over the unit cube is the effective number of dimensions for 
the integrand $f$.)
\end{quotation}
His $k$ is the nominal dimension. Inputs after $x_s$ are completely 
ignored by $f$, although that index $s$ depends on $\bsx$. 
This notion is close to the modern notion of truncation dimension described 
below. In Richtmyer's application, one could tell for a given 
sequence $x_1,\dots,x_s$ that subsequent inputs could not make any 
difference, so his measure can be estimated by sampling. 
Incidentally, 
Richtmyer's abstract is pessimistic about the prospects for QMC to improve upon MC,
based on his numerical experience. 
We can now see that his integrands were not smooth enough for him to benefit greatly from QMC, despite their low effective dimension.

Paskov~\cite{pask:1994} used quasi-Monte Carlo points to estimate the expected 
payouts of ten tranches in a collateralized mortgage obligation. The integrands 
had a nominal dimension of $360$, corresponding to one random interest rate value 
per month in a 30 year model. 
He noticed that only one of the ten integrands seemed to use all $360$
inputs.  The least number used by any tranche was $77$. The number of inputs used 
is much like Richtmyer's definition, except that Paskov is interested in 
the number used anywhere in the cube while Richtmyer is averaging the 
number used over the cube. 

Using $d$ for nominal and $k$ for effective dimension, Paskov goes on to define 
the effective dimension to be  the smallest $k$ for which 
\begin{align}\label{eq:paskdef}
\biggl|
\int_{[0,1]^k} f(x_1,\dots,x_k,0,\dots,0) 
\rd x_1\cdots \rd x_k 
-\int_{[0,1]^d} f(\bsx)\rd\bsx 
\biggr| \le \varepsilon 
\biggl|\int_{[0,1]^d} f(\bsx)\rd\bsx\biggr|
\end{align}
holds. 
This definition allows for an $x_{k+1}$ to be used by $f$, but 
with negligible impact.
It is also a form of truncation dimension. 
He used $\varepsilon = 0.001$.  He estimated the effective dimension 
for the ten tranches, getting values between $42$ and $338$. 
These values are not especially small. 
The choice of trailing zeros inside $f$ 
in~\eqref{eq:paskdef}
appears odd given that the CMO used Gaussian random variables. 
Perhaps trailing $1/2$'s  were actually used within $[0,1]^d$, corresponding 
to trailing zeros for the resulting Gaussian variables. 

By comparing  error variances from Latin hypercube sampling 
 \cite{mcka:beck:cono:1979,stei:1987}
and plain Monte Carlo,
Caflisch, Morokoff and Owen
\cite{cafmowen} infer that about $99.96\%$ of the variance for an integrand in a CMO model 
comes from its additive contributions.  
They introduced two 
notions of effective dimension in order to explain the success of QMC 
on high dimensional problems from financial valuation. 
Let $f$ have ANOVA effects $f_u$ with variances $\sigma^2_u$
for $u\subseteq1{:}d$. 
Then $f$ has effective dimension $s$ in the truncation sense if 
$s$ is the smallest integer with 
\begin{align}\label{eq:cmotrunc}
\sum_{u:\lceil u\rceil\le s}\sigma^2_u\ge 0.99 \sigma^2.
\end{align}
Like the notions of Richtmyer and of Paskov, this quantity describes the importance of the
first $s$ compnents of $\bsx$.
It  has effective dimension $s$ in the superposition sense if  $s$ is the smallest integer with 
\begin{align}\label{eq:cmosuper}
\sum_{u:|u|\le s}\sigma^2_u\ge 0.99 \sigma^2.
\end{align}
The CMO integrand mentioned above has effective dimension $1$ in the superposition sense.

The arbitrary constant $0.99$ may be explained as follows. 
If we use plain Monte Carlo 
with $n$ observations  then our variance is 
${\sigma^2}/{n}= \sum_{u}{\sigma_u^2}/{n}$. 
If more than $99$\% of the variance comes from some subset of 
effects $f_u$, then a method such as QMC which can integrate 
them at a better rate of convergence has a possibility of 
attaining a $100$-fold improvement over MC, even if it is no 
better than MC for the other effects, for a given sample
size $n$.  Users can reasonably 
ignore a method that is say twice as fast, due to tradeoffs 
in implementation difficulty or even familiarity. 
When a $100$-fold improvement is available, it should be harder to ignore. 

By writing $f(\bsx) = \sum_{u\subseteq 1{:}d}f_u(\bsx)$ where $f_u$ depends
on $\bsx$ only through $\bsx_u$,  we can bound the QMC
error by applying the Koksma-Hlawka inequality \cite{nied92} term by term. 
The integration error is at most
\begin{align}\label{eq:decomp}
\sum_{u\subseteq 1{:}d,u\ne\emptyset} D_n^*(\bsx_{1,u},\dots,\bsx_{n,u})V_\hk(f_u)
\end{align}
where $V_\hk$ is the Hardy-Krause variation of $f_u$  and $D_n^*$ is the
star discrepancy of some $|u|$-dimensional points.  
Equation~\eqref{eq:decomp} shows how the notion of superposition dimension can 
explain the success of QMC in high dimensional problems.
It is common that QMC points have very small values of $D_n^*$ above
when $|u|$ is small.   Then all we need is for $V_\hk(f_u)$
to be small for the large $|u|$, that is, for $f$
to be dominated by its low dimensional parts.

The Hardy-Krause variations in~\eqref{eq:decomp} have no practical estimates.
The superposition dimension~\eqref{eq:cmosuper} is essentially 
using variance components $\sigma^2_u$ as a proxy.  
While $V_\hk(f_u)$ and $\sigma_u$ both measure the magnitude of $f_u$,
only $V_\hk$ captures the smoothness that QMC exploits.

The mean dimension \cite{dimdist} of a (nonconstant) function 
in the superposition sense is $\sum_u|u|\sigma^2_u/\sigma^2$. 
This quantity varies continuously, and so it can distinguish between integrands 
that have the same superposition dimension \eqref{eq:cmosuper}. 
It is easier to estimate than the effective dimension
because \cite{meandim} show that it is a simple sum of  Sobol' indices~\cite{sobo:1993}.


The above definitions of effective dimension apply to integrands taken 
one at a time.  We are also interested in notions of effective dimension 
that apply to spaces of functions. 
The first such notion was due to Hickernell~\cite{hick:1998}.
Letting $f$ be randomly drawn from a space of functions he says
that those functions are proportion $p$ (such as $p=0.99$) 
of truncation dimension $s$ if
$\sum_{\lceil u\rceil\le s} \e( \sigma_u^2(f) ) = p \e(\sigma^2(f))$
where the expectation $\e(\cdot)$ is with respect to random~$f$.
They are proportion $p$ of superposition dimension $s$ if
$\sum_{|u|\le s} \e( \sigma_u^2(f) ) = p \e(\sigma^2(f))$.
He then develops expressions for these proportions in terms
of a certain shift-invariant kernel derived from the covariance
kernel of his random functions.

Wang and Fang \cite{wang:fang:2003} defined a different notion 
of effective dimension for spaces.  They consider Korobov
spaces of periodic functions on $[0,1)^d$. Those spaces are reproducing
kernel Hilbert spaces.  Letting the kernel be $K(\bsx,\bsx')$ for
$\bsx,\bsx'\in[0,1)^d$, they define the `typical functions' in
that space to be $f_{\bsx'}(\bsx)=K(\bsx,\bsx')$
for some $\bsx'\in[0,1)^d$.  For product weights, the
effective dimension  of these typical functions,
in either the superposition or truncation senses,
is independent of $\bsx'$. They adopt that common effective dimension as the
effective dimension of the space.

The bound in~\eqref{eq:decomp} applies also for the anchored decomposition $f=\sum_uf_u^*$,
or indeed for  any of the decompositions in \cite{kuo:sloa:wasi:wozn:2010}.
The series of papers
\cite{kuo2010liberating,plaskota2011tractability,wasilkowski2012liberating}
introduced  an $\varepsilon$-superposition dimension $d(\varepsilon)$
based on the anchored decomposition.  Their MDM
is a randomized  algorithm that integrates only some of the terms in the anchored decomposition. 
The algorithm can attain root mean squared error at most $\varepsilon$
(with respect to randomization) for any $f$ in the unit ball of a weighted
Sobolev space, and it does so using only $f^*_u$
for $|u|\le d(\varepsilon)$.
Lemma 1 of \cite{plaskota2011tractability} 
shows that $d(\varepsilon) = O( \log(1/\varepsilon)
/\log(\log(1/\varepsilon)))$ as $\varepsilon\to0$ for product weights with sufficiently fast decay.
The number of anchored functions $f^*_u$ that need to be considered
was shown to be $O(\varepsilon^{-1})$ in \cite{gilbert2017small}.

Equation (10) of \cite{plaskota2011tractability} gives an expression for $d(\varepsilon)$.
Definitions of $d(\varepsilon)$ are given in 
equation (9) of \cite{kuo2010liberating} (see also (25)) and 
equation (14) of \cite{plaskota2011tractability}.
The precise definitions depend on numerous additional constants
describing error bounds for integration with respect to $\bsx_u$
and how to tradeoff between two sources of error that contribute to an error bound.

The technical report \cite{effdim-tr} is a predecessor of the present paper.  
It investigates some weighted spaces where much better performance than
Monte Carlo could be attained despite high nominal dimension.
It showed that none of the integrands involved could have large ANOVA
contributions from high dimensional parts. We postpone statements of
these conditions and results to Section~\ref{sec:effdim}.
A serious weakness in that paper was that the results only apply to periodic
functions. The present paper removes that weakness.

Kritzer, Pillichshammer and Wasilkowski 
\cite{kritzer2016very} define a truncation 
dimension counterpart to the $d(\varepsilon)$ quantity
given above, using the anchored decomposition.
In their definition, the space has 
effective dimension $s$ if the difference between an 
estimate computed using just $s$ inputs and a 
computation using all inputs is below a some small multiple $\varepsilon$ of the 
norm of a ball in that weighted space.  The bound still holds when 
the `all inputs' computation is hypothetical, as it could be 
for infinite dimensional problems. 

Similar but not quite identical definitions to the ones given here have
been used in the information based complexity literature
for the anchored decomposition.
Equation (4) of~\cite{gilbert2017small} (see also~\cite{hinrichs2018truncation}) define the set $\ca\subseteq1{:}d$
to be an active set if 
\begin{align}\label{eq:their4}
\left|
\mathcal{S}\Biggl(\sum_{u\not\in\ca(\varepsilon)}f^*_u\Biggr) 
\right|
\le \varepsilon 
\left\Vert \sum_{u\not\in\ca(\varepsilon)}
f^*_u\right\Vert_{\cf},\quad \forall f\in\cf,
\end{align}
where $\mathcal{S}$ is the operator of interest, here integration over the unit cube.
They define the $\varepsilon$-superposition dimension of $f$ to be
$$
d^{\mathrm{SPR}}(\varepsilon) = 
\min\Biggl\{
\# \ca\mid \text{$\ca$ satisfies~\eqref{eq:their4}}
\Biggr\}. 
$$
While these definitions make sense for the anchored decomposition,
they fail for the ANOVA decomposition.  Because $\int f_u(\bsx)\rd\bsx=0$
for $u\ne\emptyset$ we find that $\ca =\{\emptyset\}$ is an active set for any $\varepsilon>0$
then we always get $d^{\mathrm{SPR}}(\varepsilon) = 0$. 
Intuitively, if we could actually compute the truncated ANOVA decomposition,
we would have the exact answer from $f_\emptyset$ and get superposition or truncation dimension zero.

Recently, Sobol' and Shukhman \cite{sobol2018average} look at some particle
transport problems, similar to what Richtmyer studied.  They show that those
problems typically have low mean dimension.

Our focus here is on integration and an $L^2$ approach that allows
comparisons to Monte Carlo. Effective
dimension has also been studied for approximation problems.
See \cite{wasilkowski2014tractability,kritzer2018truncation} and references therein.
The latter reference includes a truncation dimension for approximation.
See also \cite{hinrichs2018truncation}, who  consider general linear operators. 
There has been some work outside of $L^2$, on Banach spaces \cite{kritzer2017note}.
The results we present are for unanchored spaces. Many of the papers
in information based complexity work instead with anchored spaces.
Hefter and Ritter \cite{hefter2015embeddings}
establish bounds on the norms of embeddings between 
anchored spaces and the unanchored spaces we consider here. 
Such embeddings allow error bounds in unanchored spaces
to be translated into bounds on anchored spaces, although one
then has to keep track of the norms of those embedding operators.

The literature has a third sense in which functions can have 
a low effective dimension.
L'Ecuyer and Lemieux \cite{lecu:lemi:2000} give a `successive dimensions'
sense in which the scale of non-empty  $u\subseteq1{:}d$ is given by
$1+\max\{ j\mid j\in u\}-\min\{ j\mid j\in u\}$  and $\emptyset$
has scale $0$.  A reviewer points out that effective dimension in
a generalized sense can be defined through a nested sequence of $M+1$ 
sets of subsets of $1{:}d$ given by
$\emptyset = U_0 \subset U_1 \subset U_2 \subset \cdots \subset U_M = 2^{1{:}d}$.
Then the general notion of effective dimension is
$$
\min\Bigl\{ s\mid \sum_{u\not\in U_s}\sigma^2_u(f) < \varepsilon \sigma^2(f)\Bigr\}.
$$

\section{Some Poincar\'e type inequalities}\label{sec:poincare}

Our main tool will  be bounds on $L^2$ norms based 
on integrated squared derivatives. This section presents them with 
some history.  These 
are generally known as Poincar\'e inequalities. 
Poincar\'e worked with integrals over more general domains 
than $[0,1]$ as well as more general differential operators than those used here. 

\begin{theorem}\label{thm:leastpend1}
Let $f$ be differentiable on the finite interval $(a,b)$
and satisfy $\int_a^bf(x)\rd x=0$. 
Then 
\begin{align}\label{eq:leastpend1}
\int_{a}^b f'(x)^2\rd x 
\ge 
\Bigl(\frac\pi{b-a}\Bigr)^2{\int_{a}^b f(x)^2\rd x},
\end{align}
and equality is attained for some nonzero $f$. 
\end{theorem}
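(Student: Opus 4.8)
The bound \eqref{eq:leastpend1} is the classical Poincar\'e--Wirtinger inequality on an interval, and the plan is to prove it by a Fourier cosine expansion, which simultaneously pins down the extremal $f$. If $\int_a^b f'(x)^2\rd x=+\infty$ there is nothing to prove, so I would assume $f'\in L^2(a,b)$; since $f$ is differentiable on $(a,b)$ and $f'$ is then integrable, $f$ is absolutely continuous on $[a,b]$, and this is what legitimises the integration by parts used below.

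Next I would expand $f$ in the complete orthogonal system $\phi_k(x)=\cos\bigl(k\pi(x-a)/(b-a)\bigr)$, $k\ge0$, of $L^2(a,b)$ --- the Neumann eigenfunctions of $-\mrd^2/\mrd x^2$ on $(a,b)$ --- writing $f=\sum_{k\ge0}b_k\phi_k$. The hypothesis $\int_a^b f(x)\rd x=0$ forces $b_0=0$. I would then expand $f'$ in the companion complete orthogonal system $\psi_k(x)=\sin\bigl(k\pi(x-a)/(b-a)\bigr)$, $k\ge1$, of $L^2(a,b)$, writing $f'=\sum_{k\ge1}c_k\psi_k$. Integrating by parts and using $\psi_k(a)=\psi_k(b)=0$ together with the absolute continuity of $f$, I would find $c_k=-\bigl(k\pi/(b-a)\bigr)b_k$ when the two bases are normalised the same way.

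Parseval's identity applied to each expansion then gives $\int_a^b f(x)^2\rd x=\tfrac{b-a}{2}\sum_{k\ge1}b_k^2$ and $\int_a^b f'(x)^2\rd x=\tfrac{b-a}{2}\sum_{k\ge1}\bigl(k\pi/(b-a)\bigr)^2b_k^2$. Since $k^2\ge1$ for all $k\ge1$, comparing these sums term by term yields \eqref{eq:leastpend1}. Equality forces $b_k=0$ for all $k\ge2$, i.e.\ $f(x)=b_1\cos\bigl(\pi(x-a)/(b-a)\bigr)$; conversely this function is nonzero for $b_1\ne0$, is differentiable with $\int_a^b f(x)\rd x=0$, and turns \eqref{eq:leastpend1} into an equality, which proves the final assertion.

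The one step needing genuine care is the reduction ``differentiable with $f'\in L^2$'' $\Rightarrow$ absolutely continuous, because the integration by parts that links the cosine coefficients of $f$ to the sine coefficients of $f'$ --- the heart of the term-by-term comparison --- is invalid without it; the rest is routine manipulation of orthogonal bases and Parseval. As an alternative to working directly on $(a,b)$, one may substitute $t=\pi(x-a)/(b-a)$ to reduce to $(0,\pi)$ with constant $1$, or read the result off as the statement that the least eigenvalue of $-\mrd^2/\mrd x^2$ on $(a,b)$ under Neumann boundary conditions, restricted to the mean-zero subspace, is $(\pi/(b-a))^2$.
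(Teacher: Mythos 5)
Your argument is correct, but it is worth noting that the paper does not actually present a proof: it disposes of Theorem~\ref{thm:leastpend1} by citing pages 295--296 of Stekloff's 1901 paper (and remarks that Sobol' gave an alternative proof via the calculus of variations). What you supply instead is a self-contained spectral proof: expand $f$ in the Neumann eigenfunctions $\cos\bigl(k\pi(x-a)/(b-a)\bigr)$, expand $f'$ in the companion sine system, link the coefficients by integration by parts, and compare the two Parseval identities term by term, which also hands you the extremal $f(x)=\cos\bigl(\pi(x-a)/(b-a)\bigr)$ (the same function, up to sign, as the paper's $\sin\bigl(\pi(x-m)/s\bigr)$ with $m=(a+b)/2$, $s=b-a$). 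You are right that the only delicate point is the reduction from ``differentiable on $(a,b)$ with $f'\in L^2$'' to absolute continuity, which licenses the integration by parts; this follows from the classical theorem that an everywhere-differentiable function with Lebesgue-integrable derivative is the integral of that derivative (note $f'\in L^2$ gives $f'\in L^1$ on the bounded interval, and the continuous extension to $[a,b]$ comes for free), and your dismissal of the case $\int_a^b f'(x)^2\rd x=\infty$ takes care of the remaining generality. The trade-off is simply that the paper leans on the classical literature and keeps its exposition short, while your route makes the statement, its sharp constant, and the absence of any periodicity or boundary condition (which the surrounding discussion emphasizes) verifiable in place; either is acceptable, and your hypotheses match the theorem as stated.
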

\begin{proof}
This is on pages 295--296 of \cite{stek:1901}. 
\end{proof}

The constant $\pi$ in equation~\eqref{eq:leastpend1} will appear throughout 
our formulas. 
Things would be different if we were to work on the interval $[0,\pi]$ or $[0,2\pi]$. 
We retain a focus on $[0,1]$ because weighted 
spaces are almost always defined over $[0,1]^d$.

Theorem~\ref{thm:leastpend1} is commonly known as Wirtinger's 
theorem, though Stekloff's work was earlier. 
Sobol' \cite{sobo:1963} has a different proof than Stekloff,  based on the calculus of variations. 
This theorem is often given with an additional condition that $f(a)=f(b)$
(periodicity), though such a condition is not necessary. 

This inequality could be much older than 1901. 
See \cite[Chapter 2]{mitr:peca:fink:1991}. 
The condition $\int_a^bf(x)\rd x = 0$ can be removed if 
$f(a)=f(b)=0$. Equality is attained if $f(x)$ is a multiple of 
$\sin( \pi( x-m)/s)$ for $m=(a+b)/2$ and $s=b-a$.

\begin{lemma}\label{lem:multileastpen}
Let $f(\bsx)$ defined on $[0,1]^d$
satisfy $\int_0^1f(\bsx)\rd x_j=0$
for $j=1,\dots,d$. If $d\ge r\ge 0$, 
and $\partial^{1{:}r}f$ exists, then 
\begin{align}\label{eq:multileastpen}
\int_{[0,1]^d}
\Bigl( \frac{\partial^r}{\partial x_1\cdots\partial x_r}f(\bsx) 
\Bigr)^2\rd\bsx 
\ge \pi^{2r}\int_{[0,1]^d}f(\bsx)^2\rd\bsx. 
\end{align}
\end{lemma}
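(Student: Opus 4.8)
The plan is to prove the inequality one coordinate at a time, peeling off one factor of $\pi^2$ at each step by applying the one-dimensional Wirtinger inequality (Theorem~\ref{thm:leastpend1} with $(a,b)=(0,1)$) in the appropriate slice, and then integrating over the remaining variables. The key observation that makes this work is that the mean-zero conditions $\int_0^1 f(\bsx)\rd x_j=0$ for $j=1,\dots,d$ are inherited by all the partial derivatives we encounter: differentiating under the integral sign (justified because $\partial^{1{:}d}f$ is continuous, as guaranteed by membership in $\cf$, so all lower-order mixed partials are continuous and interchanges of differentiation with $\int_0^1\cdot\rd x_j$ are legitimate) shows that $\int_0^1 \partial^{1{:}k}f(\bsx)\rd x_j=0$ for every $j\in\{1,\dots,k\}$, and in particular for $j=k+1$ this is just the original condition on a coordinate that has not yet been differentiated.

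First I would set up an induction on $r$. The base case $r=0$ is the trivial identity $\int f^2 = \int f^2 \ge \pi^0 \int f^2$. For the inductive step, suppose the bound holds with $\pi^{2r}$ for functions satisfying the hypotheses; I want to go from $r$ to $r+1$ (with $r+1\le d$). Write $g=\partial^{1{:}(r+1)}f$ and consider the inner integral over $x_{r+1}$ alone, holding $\bsx_{-\{r+1\}}$ fixed. The function $x_{r+1}\mapsto \partial^{1{:}r}f(\bsx)$ is differentiable in $x_{r+1}$ with derivative $g(\bsx)$, and it has mean zero in $x_{r+1}$ by the inheritance remark above. Theorem~\ref{thm:leastpend1} on $(0,1)$ then gives
\begin{align*}
\int_0^1 g(\bsx)^2\rd x_{r+1} \ge \pi^2 \int_0^1 \bigl(\partial^{1{:}r}f(\bsx)\bigr)^2\rd x_{r+1}.
\end{align*}
Integrating both sides over the remaining $d-1$ coordinates $\bsx_{-\{r+1\}}\in[0,1]^{d-1}$ yields $\int_{[0,1]^d} g^2\rd\bsx \ge \pi^2 \int_{[0,1]^d} (\partial^{1{:}r}f)^2\rd\bsx$. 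Now apply the inductive hypothesis to the right-hand side — noting $\partial^{1{:}r}f$ still satisfies the mean-zero conditions in coordinates $1,\dots,d$ (those it has been differentiated in, and those it has not) — to get $\int (\partial^{1{:}r}f)^2\rd\bsx \ge \pi^{2r}\int f^2\rd\bsx$, and combine to obtain $\int_{[0,1]^d} g^2\rd\bsx \ge \pi^{2(r+1)}\int_{[0,1]^d} f^2\rd\bsx$, completing the induction.

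The main obstacle, such as it is, is purely a matter of bookkeeping rather than of real mathematical difficulty: one must be careful that at each stage the relevant one-dimensional function is genuinely differentiable in the variable being integrated (this is where continuity of $\partial^{1{:}d}f$, hence of all intermediate mixed partials, is used to legitimize the slicing and the differentiation-under-the-integral steps), and that the mean-zero property is correctly tracked through the differentiations. An alternative, more symmetric route would expand $f$ in the tensor-product cosine basis $\prod_j \sqrt{2}\cos(\pi k_j x_j)$ (the eigenfunctions in Theorem~\ref{thm:leastpend1}); the mean-zero conditions force all coefficients with some $k_j=0$ to vanish, so every surviving mode has $k_j\ge 1$ for $j=1,\dots,r$, and Parseval gives $\int (\partial^{1{:}r}f)^2 = \sum_{\bsk} \pi^{2r}(k_1\cdots k_r)^2 c_{\bsk}^2 \ge \pi^{2r}\sum_{\bsk} c_{\bsk}^2 = \pi^{2r}\int f^2$. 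I would present the induction as the primary argument since it avoids any convergence discussion for the Fourier series and uses only the already-cited Theorem~\ref{thm:leastpend1}.
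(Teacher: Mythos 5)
Your induction on $r$ --- applying Theorem~\ref{thm:leastpend1} in the $x_{r+1}$ slice, integrating over the remaining coordinates, and invoking the inductive hypothesis, with the mean-zero property passed to $\partial^{1{:}r}f$ by differentiating under the integral --- is exactly the argument the paper compresses into ``it extends to $d\ge r\ge 1$ by induction,'' so your proposal is correct and takes essentially the same route, just written out in full.
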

\begin{proof}
This holds for $r=0$ by convention. 
It holds for $d=r=1$ by 
Theorem~\ref{thm:leastpend1}. 
It extends to $d\ge r\ge1$ by induction. 
\end{proof}

\begin{theorem}\label{thm:normlowbound}
Let $f$ be defined on $[0,1]^d$ with 
$\partial^{1{:}d} f$ continuous. 
Furthermore let $f$ have ANOVA effects 
$f_v$ for $v\subseteq1{:}d$ with variance components 
$\sigma^2_v$. Then 
\begin{align}\label{eq:normlowbound}
\Vert f\Vert_\bsgamma^2 
\ge 
\gamma_\emptyset^{-1}\mu^2 
+\sum _{u\ne\emptyset}\gamma_u^{-1}\pi^{2|u|}\sigma^2_u. 
\end{align}
\end{theorem}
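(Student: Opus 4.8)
The plan is to expand $\|f\|_\bsgamma^2$ term by term over subsets $u\subseteq1{:}d$ and bound each term separately, showing that the $u$-th term of the norm dominates $\gamma_u^{-1}\pi^{2|u|}\sigma^2_u$. Recall that the $u$-th summand in~\eqref{eq:gamnorm} is $\gamma_u^{-1}\int_{[0,1]^{|u|}}\bigl(\int_{[0,1]^{d-|u|}}\partial^u f(\bsx)\rd\bsx_{-u}\bigr)^2\rd\bsx_u$. The key observation is that integrating $\partial^u f$ against $\rd\bsx_{-u}$ and then relating the result to the ANOVA component $f_u$ is exactly where the ANOVA structure enters: because $\partial^u$ commutes with integration over the other variables (legitimate since $\partial^{1{:}d}f$ is continuous, as noted after~\eqref{eq:itscts}), one has $\int_{[0,1]^{d-|u|}}\partial^u f(\bsx)\rd\bsx_{-u} = \partial^u\bigl(\int_{[0,1]^{d-|u|}} f(\bsx)\rd\bsx_{-u}\bigr)$, and the inner integral is the sum $\sum_{v\subseteq u} f_v(\bsx_u)$ of the ANOVA components supported on subsets of $u$. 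Differentiating by $\partial^u$ kills every $f_v$ with $v\subsetneq u$ (each such $v$ misses some $j\in u$, and $\partial_{x_j}$ of a function not depending on $x_j$ vanishes), leaving exactly $\partial^u f_u$.

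So the $u$-th term equals $\gamma_u^{-1}\int_{[0,1]^{|u|}}\bigl(\partial^u f_u(\bsx_u)\bigr)^2\rd\bsx_u$. Now $f_u$, viewed as a function on $[0,1]^{|u|}$, satisfies $\int_0^1 f_u\rd x_j=0$ for every $j\in u$, so Lemma~\ref{lem:multileastpen} applies with dimension $|u|$ and $r=|u|$ (the full mixed partial), after relabelling the coordinates of $u$ as $1,\dots,|u|$. That gives $\int_{[0,1]^{|u|}}(\partial^u f_u)^2\rd\bsx_u \ge \pi^{2|u|}\int_{[0,1]^{|u|}} f_u^2\rd\bsx_u = \pi^{2|u|}\sigma_u^2$ for $u\ne\emptyset$. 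For $u=\emptyset$, the term is simply $\gamma_\emptyset^{-1}\mu^2$ since $\partial^\emptyset f=f$ and $\int_{[0,1]^d} f\rd\bsx=\mu$. Summing over all $u$ and discarding nothing yields~\eqref{eq:normlowbound} — in fact with equality replaced by $\ge$ only through the Poincar\'e step.

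The main obstacle, and the one step worth spelling out carefully, is the interchange of differentiation and integration together with the claim that $\partial^u$ annihilates the lower ANOVA terms. Justifying $\int \partial^u f\rd\bsx_{-u} = \partial^u\int f\rd\bsx_{-u}$ needs the continuity of $\partial^{1{:}d}f$ (hence of all intermediate mixed partials) so that differentiation under the integral sign is valid on the compact cube; this is precisely why the paper works in the pre-Sobolev space $\cf$ of~\eqref{eq:itscts} rather than a distributional Sobolev space. The annihilation claim is then routine: write the inner integral as $g_u(\bsx_u):=\int_{[0,1]^{d-|u|}}f(\bsx)\rd\bsx_{-u}$, note its ANOVA decomposition over $[0,1]^{|u|}$ consists exactly of the $f_v$ with $v\subseteq u$ (integrating out $\bsx_{-u}$ removes all ANOVA terms touching $-u$), and observe that any $f_v$ with $v\subsetneq u$ is constant in some $x_j$, $j\in u$, so $\partial^u$ kills it. I would also remark that~\eqref{eq:normlowbound} is sharp in the sense that each Poincar\'e inequality used is sharp (by Theorem~\ref{thm:leastpend1}), though simultaneous equality across all $u$ is generally not achievable by a single $f$.
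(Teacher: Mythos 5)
Your proof is correct and follows essentially the same route as the paper's: interchange $\partial^u$ with integration over $\bsx_{-u}$ (justified by continuity of $\partial^{1{:}d}f$), observe via the ANOVA annihilation properties that only $\partial^u f_u$ survives in the $u$-th term of~\eqref{eq:gamnorm}, and then apply Lemma~\ref{lem:multileastpen} to each nonempty $u$. The only difference is cosmetic — you integrate out $\bsx_{-u}$ before differentiating, while the paper drops the non-superset terms by differentiating first — so no further comparison is needed.
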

\begin{proof}
Let $f$ have ANOVA effects $f_v$ for $v\subseteq1{:}d$. 
Then for $u\ne\emptyset$,
\begin{align*}
&\int_{[0,1]^{d-|u|}}\partial^u \sum_{v\subseteq 1{:}d}f_v(\bsx)\rd\bsx_{-u}
=\partial^u \int_{[0,1]^{d-|u|}}\sum_{v\supseteq u}f_v(\bsx)\rd\bsx_{-u}\\
&=\partial^u\int_{[0,1]^{d-|u|}} f_u(\bsx)\rd\bsx_{-u} = \partial^u f_u(\bsx_u{:}\bszero_{-u}),
\end{align*}
because $f_u$ does not depend on $\bsx_{-u}$. 
Next 
$$\int_{[0,1]^{|u|}}(\partial^uf_u(\bsx_u{:}\bszero_{-u}))^2\rd\bsx_u 
=\int_{[0,1]^{d}}(\partial^uf_u(\bsx))^2\rd\bsx 
\ge \pi^{2|u|}\sigma^2_u$$
by Lemma~\ref{lem:multileastpen}. 
The result follows from~\eqref{eq:gamnorm}. 
\end{proof}

\section{Effective dimension of a space}\label{sec:effdim}


We begin with a ball of real-valued functions on $[0,1]^d$ given by 
$$
\cb(\bsgamma,\rho) = \{
f\in\cf \mid \Vert f\Vert_\bsgamma \le \rho\}
$$
where $\rho>0$. 
We pick the radius $\rho$ of the ball to make it just large enough 
to contain a function of unit variance.  That radius is 
\begin{align}\label{eq:defrhostar}
\rho^*=\rho^*(\bsgamma) 
= \inf\bigl\{ \rho >0\mid \exists f\in \cb(\bsgamma,\rho) 
\text{\ with $\sigma^2(f)=1$}\bigr\}. 
\end{align}
Monte Carlo sampling of a function $f\in\cb(\bsgamma,\rho^*)$
can have a root mean squared error as high as $1/\sqrt{n}$.

The ball is said to be of effective dimension $s_T$ in the truncation 
sense at level $1-\varepsilon$ (such as $\varepsilon =0.01$) 
if the sum of $\sigma^2_u$ for all sets $u$ that include
any index $j>s_T$ is below $\varepsilon$ for every $f\in \cb(\bsgamma,\rho^*)$,
and if the same is not true of $s_T-1$.  That is,
\begin{align}\label{eq:effdimspacetrunc}
s_T = \min\biggl\{ s\in\natu\mid 
\sup_{f\in\cb(\bsgamma,\rho^*)}\sum_{u:\lceil u\rceil > s}\sigma^2_u(f) < \varepsilon
\biggr\},
\end{align}
or equivalently
\begin{align}\label{eq:effdimspacetrunc2}
\sup_{f\in\cb(\bsgamma,\rho^*)}\sum_{u:\lceil u\rceil > s_T}\sigma^2_u(f) < \varepsilon  
\le \sup_{f\in\cb(\bsgamma,\rho^*)}\sum_{u:\lceil u\rceil \ge s_T}\sigma^2_u(f).  
\end{align}  
For superposition dimension, we replace the largest index by
the cardinality.
The ball $\cb(\bsgamma,\rho^*)$ is of effective dimension $s_S$ in the superposition 
sense at level $1-\varepsilon$, for 
\begin{align}\label{eq:effdimspacesuper}
s_S =\biggl\{ \min s\in\natu
\mid \sup_{f\in\cb(\bsgamma,\rho^*)}\sum_{u: |u| > s}\sigma^2_u(f) < \varepsilon\biggr\}. 
\end{align}

If the ball is of low effective dimension, then no function in it 
contains any non-negligible high order components,
where order is quantified by $|u|$ or $\lceil u\rceil$. 
Here $\varepsilon = 0.01$ corresponds to the usual choice,
but we will want to see how effective dimension depends on $\varepsilon$. 

Normalizing to variance one is interpretable, but not really necessary. 
We can work with ratios of norms. 
An equivalent condition to~\eqref{eq:effdimspacetrunc}
is that $s_T$ is the smallest integer $s$ for which 
$$
\sup_{f:\, 0<\Vert f\Vert_\bsgamma <\infty} \frac{\sum_{u:\lceil u\rceil \ge s+1}\sigma_u^2(f)}{\Vert f\Vert^2_\bsgamma} < \frac\varepsilon{{\rho^*}^2}.
$$

\begin{proposition}\label{prop:usualminrho}
Let weights $\gamma_u$ be such that 
$\gamma_{\{1\}}\pi^{-2}\ge \gamma_u \pi^{-2|u|}$
for all non-empty $u\subseteq1{:}d$. 
Then the smallest $\rho$ for which 
$\cb(\bsgamma,\rho)$ contains a function of variance $1$
is $\rho^*(\bsgamma) = \pi(\gamma_{\{1\}})^{-1/2}$. 
\end{proposition}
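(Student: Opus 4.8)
The plan is to establish the value of $\rho^*$ by proving two matching inequalities: a lower bound showing that every $f$ with $\sigma^2(f)=1$ must satisfy $\Vert f\Vert_\bsgamma \ge \pi(\gamma_{\{1\}})^{-1/2}$, and an upper bound exhibiting (or approaching via an infimum) a function of unit variance whose norm equals that value. For the lower bound I would start from Theorem~\ref{thm:normlowbound}, which gives $\Vert f\Vert_\bsgamma^2 \ge \gamma_\emptyset^{-1}\mu^2 + \sum_{u\ne\emptyset}\gamma_u^{-1}\pi^{2|u|}\sigma^2_u$. Dropping the nonnegative $\mu^2$ term and using the hypothesis $\gamma_u^{-1}\pi^{2|u|} \ge \gamma_{\{1\}}^{-1}\pi^2$ for every non-empty $u$, I get $\Vert f\Vert_\bsgamma^2 \ge \gamma_{\{1\}}^{-1}\pi^2 \sum_{u\ne\emptyset}\sigma^2_u = \gamma_{\{1\}}^{-1}\pi^2\,\sigma^2(f)$. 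So any $f$ with unit variance has norm at least $\pi(\gamma_{\{1\}})^{-1/2}$, hence $\rho^*(\bsgamma)\ge \pi(\gamma_{\{1\}})^{-1/2}$.

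For the upper bound I would look for a function that makes all the inequalities above tight. The natural candidate is a function depending only on $x_1$, built from the equality case of Wirtinger/Stekloff's inequality (Theorem~\ref{thm:leastpend1}) on $[0,1]$: take $f(\bsx) = c\cos(\pi x_1)$ for a suitable constant $c$. This has mean zero, so $\mu=0$ and the $u=\emptyset$ term vanishes; its only nonzero ANOVA component is $f_{\{1\}}=f$ itself, with $\sigma^2_{\{1\}} = c^2/2$. Its norm is $\Vert f\Vert_\bsgamma^2 = \gamma_{\{1\}}^{-1}\int_0^1 (f'(x_1))^2\rd x_1 = \gamma_{\{1\}}^{-1}\pi^2 c^2/2 = \gamma_{\{1\}}^{-1}\pi^2\sigma^2(f)$, achieving equality in the lower bound. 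Choosing $c=\sqrt2$ gives $\sigma^2(f)=1$ and $\Vert f\Vert_\bsgamma = \pi(\gamma_{\{1\}})^{-1/2}$, so this function lies in $\cb(\bsgamma,\pi(\gamma_{\{1\}})^{-1/2})$, which forces $\rho^*(\bsgamma)\le \pi(\gamma_{\{1\}})^{-1/2}$. Combining the two bounds gives the claimed value.

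The one point needing a little care is the regularity check: I should confirm $f(\bsx)=\sqrt2\cos(\pi x_1)\in\cf$, i.e. that $\partial^{1{:}d}f$ is continuous on $[0,1]^d$. Since $f$ depends only on $x_1$, the mixed partial $\partial^{1{:}d}f$ is identically zero for $d\ge2$ and equals $-\sqrt2\,\pi\sin(\pi x_1)$ for $d=1$, both continuous, so $f\in\cf$ and the infimum in~\eqref{eq:defrhostar} is actually attained. I do not expect a serious obstacle here; the main substantive step is simply recognizing that the hypothesis on the weights is exactly what collapses the weighted sum in Theorem~\ref{thm:normlowbound} down to a clean multiple of the total variance, with the singleton $\{1\}$ as the extremal (least-penalized) coordinate, and that the Wirtinger equality case realizes this bound.
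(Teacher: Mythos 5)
Your proof is correct and follows essentially the same route as the paper: a lower bound from Theorem~\ref{thm:normlowbound} combined with the weight hypothesis, plus an extremal univariate function attaining equality. Your witness $\sqrt2\cos(\pi x_1)$ is just $-\sqrt2\sin(\pi(x_1-1/2))$, the same function the paper uses up to sign.
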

\begin{proof}
Let $f$ have $\sigma^2(f)=1$ and suppose that $\rho<\rho^*$. 
Then by Theorem~\ref{thm:normlowbound},
$$\Vert f\Vert_\bsgamma^2 \ge \mu^2\gamma_\emptyset^{-1}
+\min_{u\ne\emptyset}\gamma_u^{-1}\pi^{2|u|}
\ge \gamma_{\{1\}}^{-1}\pi^{2}
=(\rho^*)^2>\rho^2,
$$
so $f\not\in\cb(\bsgamma,\rho)$. 
If $f(\bsx)=\sqrt{2}\sin( \pi(x_1-1/2))$,
then $\sigma^2(f)=1$ and $f\in\cb(\bsgamma,\rho^*)$. 
\end{proof}
Weights with $\gamma_u\le\gamma_{\{1\}}$ for $u\ne\emptyset$ automatically 
satisfy the condition in Proposition~\ref{prop:usualminrho}. 

\begin{proposition}\label{prop:sigmabound}
For weights $\bsgamma$, let $\rho^*(\bsgamma)$
be defined by~\eqref{eq:defrhostar}, 
let $u\subseteq1{:}d$ be non-empty,
and let $f\in\cb(\bsgamma,\rho^*(\bsgamma))$. 
Then 
$$\sigma^2_u(f) \le \rho^*(\bsgamma)^2 \gamma_u\pi^{-2|u|}.$$
If also $\gamma_{\{1\}}\pi^{-2}\ge \gamma_u \pi^{-2|u|}$, then 
$$\sigma^2_u(f) \le 
\pi^{-2(|u|-1)}
\gamma_u /\gamma_{\{1\}}.$$
\end{proposition}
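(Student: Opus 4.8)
The plan is to read both bounds directly off Theorem~\ref{thm:normlowbound} together with the nonnegativity of ANOVA variance components. First I would fix the non-empty set $u$ and a function $f\in\cb(\bsgamma,\rho^*(\bsgamma))$, so that $\Vert f\Vert_\bsgamma^2\le\rho^*(\bsgamma)^2$ by definition of the ball. Theorem~\ref{thm:normlowbound} then gives
$$
\rho^*(\bsgamma)^2\ge \Vert f\Vert_\bsgamma^2 \ge \gamma_\emptyset^{-1}\mu^2+\sum_{v\ne\emptyset}\gamma_v^{-1}\pi^{2|v|}\sigma^2_v(f).
$$
Since every summand on the right is nonnegative, I would discard all of them except the single $v=u$ term, obtaining $\gamma_u^{-1}\pi^{2|u|}\sigma^2_u(f)\le\rho^*(\bsgamma)^2$, and rearranging yields the first claim $\sigma^2_u(f)\le\rho^*(\bsgamma)^2\gamma_u\pi^{-2|u|}$.

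For the second claim I would substitute a value for $\rho^*(\bsgamma)^2$ into the first bound. Reading the extra hypothesis $\gamma_{\{1\}}\pi^{-2}\ge\gamma_v\pi^{-2|v|}$ as holding for every non-empty $v$, Proposition~\ref{prop:usualminrho} applies and gives $\rho^*(\bsgamma)=\pi\gamma_{\{1\}}^{-1/2}$, hence $\rho^*(\bsgamma)^2=\pi^2/\gamma_{\{1\}}$. Plugging this in,
$$
\sigma^2_u(f)\le \frac{\pi^2}{\gamma_{\{1\}}}\,\gamma_u\pi^{-2|u|}=\pi^{-2(|u|-1)}\frac{\gamma_u}{\gamma_{\{1\}}},
$$
as required. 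I would also note that one need not even invoke Proposition~\ref{prop:usualminrho} here: only an upper bound on $\rho^*(\bsgamma)^2$ is used, and the test function $\sqrt2\sin(\pi(x_1-1/2))$ (whose only nonzero ANOVA term is $f_{\{1\}}$, and for which only $\partial^\emptyset f$ and $\partial^{\{1\}}f$ are nonzero, with $\int_0^1 f\rd x_1=0$ and $\int_0^1(\partial^{\{1\}}f)^2\rd x_1=\pi^2$) has unit variance and $\bsgamma$-norm exactly $\pi\gamma_{\{1\}}^{-1/2}$, so $\rho^*(\bsgamma)\le\pi\gamma_{\{1\}}^{-1/2}$ unconditionally, which is all the argument requires.

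I do not anticipate a real obstacle: the content is carried entirely by Theorem~\ref{thm:normlowbound} and the evaluation of $\rho^*(\bsgamma)$, and the manipulation is just discarding nonnegative terms and a one-line substitution. The only point needing a little care is the logical strength of the hypothesis in the second half — ensuring it is invoked for all non-empty subsets (so that Proposition~\ref{prop:usualminrho} is legitimately in force), or else using the explicit sine function above to bound $\rho^*(\bsgamma)$ from above directly.
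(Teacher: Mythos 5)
Your proof is correct and follows essentially the same route as the paper: the first bound comes from dropping all but the $v=u$ term in Theorem~\ref{thm:normlowbound}, and the second from substituting $\rho^*(\bsgamma)^2\le\pi^2/\gamma_{\{1\}}$ via Proposition~\ref{prop:usualminrho}. Your added observation---that the explicit function $\sqrt2\sin(\pi(x_1-1/2))$ bounds $\rho^*(\bsgamma)$ from above unconditionally, so one need not require the hypothesis to hold for every non-empty subset as Proposition~\ref{prop:usualminrho} formally demands---is a careful tightening of a point the paper leaves implicit.
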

\begin{proof}
From Theorem~\ref{thm:normlowbound},
${\rho^*}^2\ge \Vert f\Vert^2_\bsgamma \ge \gamma_u^{-1}\pi^{2|u|}\sigma^2_u$,
establishing the first claim. Then the second one 
follows from Proposition~\ref{prop:usualminrho}. 
\end{proof}

\section{Bounds on effective dimension}\label{sec:bounds}

In this section we derive bounds on effective dimension
under assumption~\eqref{eq:mincard} where any set
$u$ of cardinality $s$ or larger, has $\gamma_u\le\gamma_{1:s}$.
We also consider  assumption~\eqref{eq:minindex} where any set $u$ containing 
an index $s$ or larger, has $\gamma_u\le\gamma_{\{s\}}$.

\begin{theorem}\label{thm:effdimbounds}
Let the weights $\bsgamma$ satisfy~\eqref{eq:mincard}. 
Then the corresponding pre-Sobolev space
has effective dimension in the superposition sense at level $1-\varepsilon$
no larger than
\begin{align}\label{eq:effdimboundsuper}
\max\{s \ge 1\mid \gamma_{1{:}s}  \ge \pi^{2(s-1)}\gamma_{\{1\}}\varepsilon\}. 
\end{align}
If the weights $\bsgamma$ satisfy~\eqref{eq:minindex},
then the corresponding pre-Sobolev space
has effective dimension in the truncation sense at level $1-\varepsilon$
no larger than
\begin{align}\label{eq:effdimboundtrunc}
\max\{s \ge 1\mid \gamma_{\{s\}}  \ge \gamma_{\{1\}}\varepsilon\}. 
\end{align}
\end{theorem}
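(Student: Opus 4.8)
The plan is to bound, for each fixed $s$, the worst-case tail sum $\sup_{f\in\cb(\bsgamma,\rho^*)}\sum_{u:\,|u|>s}\sigma^2_u(f)$ (respectively $\sum_{u:\,\lceil u\rceil>s}\sigma^2_u(f)$) using the per-component estimate from Proposition~\ref{prop:sigmabound}, and then identify for which $s$ that bound already falls below $\varepsilon$. First I would note that under~\eqref{eq:mincard} (resp.~\eqref{eq:minindex}) the hypothesis of Proposition~\ref{prop:usualminrho} holds: for any non-empty $u$ of cardinality at least $1$ we have $\gamma_u\le\gamma_{1:|u|}\le\gamma_{\{1\}}$, and since $\pi^{2|u|}\ge\pi^2$, this gives $\gamma_u\pi^{-2|u|}\le\gamma_{\{1\}}\pi^{-2}$. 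Hence $\rho^*(\bsgamma)=\pi\gamma_{\{1\}}^{-1/2}$ and the sharper conclusion of Proposition~\ref{prop:sigmabound} applies: $\sigma^2_u(f)\le\pi^{-2(|u|-1)}\gamma_u/\gamma_{\{1\}}$ for every $f$ in the ball and every non-empty $u$.

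For the superposition case, I would use~\eqref{eq:mincard} in the form $\gamma_u\le\gamma_{1:|u|}$ to get $\sigma^2_u(f)\le\pi^{-2(|u|-1)}\gamma_{1:|u|}/\gamma_{\{1\}}$, a bound depending on $u$ only through $|u|$. Summing over all $u$ with $|u|=r$, of which there are $\binom{d}{r}$, the tail sum is at most $\sum_{r>s}\binom{d}{r}\pi^{-2(r-1)}\gamma_{1:r}/\gamma_{\{1\}}$. The key observation is that it suffices to kill even the single term $r=s+1$ crudely — more precisely, the claim~\eqref{eq:effdimboundsuper} is equivalent to saying that once $s$ is large enough that $\gamma_{1:s}<\pi^{2(s-1)}\gamma_{\{1\}}\varepsilon$, the whole tail is below $\varepsilon$; I expect the argument to show the sequence $\binom{d}{r}\pi^{-2(r-1)}\gamma_{1:r}/\gamma_{\{1\}}$ is eventually dominated by a geometric (or faster-decaying) series whose sum past the first violating index is controlled by $\varepsilon$, with the $\pi^{-2(r-1)}$ factors providing the needed summability against the binomial coefficients. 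For the truncation case, I would instead group by $\lceil u\rceil$: every non-empty $u$ with $\lceil u\rceil=j$ satisfies $j\in u$, hence $\lceil u\rceil=j\ge 1$ triggers~\eqref{eq:minindex}, giving $\gamma_u\le\gamma_{\{j\}}$ and therefore, using $|u|\ge1$ so $\pi^{-2(|u|-1)}\le1$, the bound $\sigma^2_u(f)\le\gamma_{\{j\}}/\gamma_{\{1\}}$; summing over the $2^{j-1}$ subsets $u\subseteq 1{:}j$ with $\lceil u\rceil=j$ yields $\sum_{j>s}2^{j-1}\gamma_{\{j\}}/\gamma_{\{1\}}$, and again the claim reduces to checking that past the first index $j$ with $\gamma_{\{j\}}<\gamma_{\{1\}}\varepsilon$ this series is below $\varepsilon$.

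The main obstacle I anticipate is exactly this last reduction: the stated bounds~\eqref{eq:effdimboundsuper} and~\eqref{eq:effdimboundtrunc} use only the condition that \emph{some} single index $s$ fails the inequality, so I must argue that failure at one index forces rapid enough decay of all subsequent terms to make the full tail sub-$\varepsilon$. This will require using the monotonicity built into~\eqref{eq:mincard}/\eqref{eq:minindex} more carefully — namely that $\gamma_{1:r}$ (resp.\ $\gamma_{\{j\}}$) is itself non-increasing along the relevant chain, so that once the ratio $\gamma_{1:r}/(\pi^{2(r-1)}\gamma_{\{1\}})$ (resp.\ $\gamma_{\{j\}}/\gamma_{\{1\}}$) drops below $\varepsilon$ it stays small, and the geometric factors $\pi^{-2}$ (resp.\ the need to beat the factor $2$ from $2^{j-1}$) do the rest. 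I would handle the truncation case by absorbing the factor $2^{j-1}$ into a slightly strengthened decay hypothesis or by observing that the definition~\eqref{eq:effdimspacetrunc} only needs the \emph{supremum over $f$} of the tail to be $<\varepsilon$ and that, together with the combinatorial sum being geometric once $\gamma_{\{j\}}/\gamma_{\{1\}}$ decays faster than $2^{-j}$, the bound follows; the superposition case is easier because $\pi^{-2(r-1)}$ dominates $\binom{d}{r}$ asymptotically. If the clean statement as written needs the tail-domination lemma to be essentially immediate, I would isolate that as a short preliminary computation before invoking it in both halves of the proof.
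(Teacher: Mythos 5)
There is a genuine gap, and it is exactly the one you flag as the ``main obstacle'': the reduction from per-component bounds to a tail bound does not go through, and no strengthening of the monotonicity in~\eqref{eq:mincard}/\eqref{eq:minindex} that the theorem actually assumes will rescue it. Your plan bounds each $\sigma^2_u(f)$ separately via Proposition~\ref{prop:sigmabound} and then sums these bounds over the tail, which introduces the factors $\binom{d}{r}$ (superposition) and $2^{j-1}$ (truncation). Those sums are not below $\varepsilon$ under the stated hypotheses. Concretely, for the paper's own example of constant weights $\gamma_u\equiv 1$ (which satisfy~\eqref{eq:mincard} with equality), your superposition tail estimate is $\sum_{r>s}\binom{d}{r}\pi^{-2(r-1)}$, which blows up with $d$, yet the theorem still asserts a small dimension bound; and for product weights $\gamma_j=j^{-\eta}$ your truncation estimate $\sum_{j>s}2^{j-1}j^{-\eta}/\gamma_{\{1\}}$ diverges, while the theorem (and Table~\ref{tab:effdim}) gives $s_T\le\varepsilon^{-1/\eta}$. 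Requiring $\gamma_{\{j\}}$ to decay faster than $2^{-j}$, as you suggest, is an additional hypothesis not present in the statement, so that route proves a weaker theorem.

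The missing idea is that the individual bounds of Proposition~\ref{prop:sigmabound} are not simultaneously attainable: all components share one norm budget. The paper bounds the \emph{entire} tail in a single application of Theorem~\ref{thm:normlowbound}. Since every $u$ with $|u|\ge s$ has, by~\eqref{eq:mincard}, weight factor $\gamma_u^{-1}\pi^{2|u|}\ge\gamma_{1:s}^{-1}\pi^{2s}$, one gets
\begin{align*}
\gamma_{\{1\}}^{-1}\pi^2={\rho^*}^2\ \ge\ \Vert f\Vert^2_\bsgamma\ \ge\ \sum_{u\ne\emptyset}\gamma_u^{-1}\pi^{2|u|}\sigma^2_u\ \ge\ \gamma_{1:s}^{-1}\pi^{2s}\sum_{|u|\ge s}\sigma^2_u,
\end{align*}
so $\sum_{|u|\ge s}\sigma^2_u\le\gamma_{1:s}\,\pi^{-2(s-1)}/\gamma_{\{1\}}$ with no combinatorial factor at all; whenever $\gamma_{1:s}<\gamma_{\{1\}}\pi^{2(s-1)}\varepsilon$ the whole tail is already below $\varepsilon$, which yields~\eqref{eq:effdimboundsuper}. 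The truncation bound~\eqref{eq:effdimboundtrunc} follows the same way using $\gamma_u^{-1}\pi^{2|u|}\ge\gamma_{\{s\}}^{-1}\pi^{2}$ for $\lceil u\rceil\ge s$ under~\eqref{eq:minindex}. Your opening observations (that~\eqref{eq:mincard} or~\eqref{eq:minindex} implies the hypothesis of Proposition~\ref{prop:usualminrho}, hence $\rho^*=\pi\gamma_{\{1\}}^{-1/2}$) are correct and are used in the paper; you should keep that part and replace the term-by-term summation with this single collective use of Theorem~\ref{thm:normlowbound}.
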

\begin{proof}
Let $f\in\cb(\bsgamma,\rho^*)$ where $\rho^*$ is given by~\eqref{eq:defrhostar}. 
Let $f$ have ANOVA variance components $\sigma^2_u$. 
Choose an integer $s> 0$. 
From Proposition~\ref{prop:usualminrho},
Theorem~\ref{thm:normlowbound} and condition~\eqref{eq:mincard},
\begin{align*}
\gamma_{\{1\}}^{-1}\pi^2  
= {\rho^*}^2\ge\Vert f\Vert^2_\bsgamma  
\ge  
\mu^2\gamma_\emptyset^{-1}+\sum_{u\ne\emptyset}\gamma_u^{-1}\pi^{2|u|}\sigma^2_u  
\ge \gamma_{1:s}^{-1}\pi^{2s}\sum_{|u| \ge s}\sigma^2_u.  
\end{align*} 
Therefore 
$\sum_{|u|\ge s}\sigma^2_u \le \gamma_{1{:s}}\pi^{-2(s-1)}/\gamma_{\{1\}}$. 
If $\gamma_{1{:}s} < \gamma_{\{1\}}\pi^{2(s-1)}\varepsilon$
then $\sum_{|u|\ge s}\sigma^2_u<\varepsilon$ and the effective 
dimension cannot be as large as $s$. 
This establishes the superposition bound~\eqref{eq:effdimboundsuper}. 

For the truncation dimension, we find that 
$\gamma_{\{1\}}^{-1}\pi^2 
\ge \gamma_{\{s\}}^{-1}\pi^{2}\sum_{\lceil u\rceil \ge s}\sigma^2_u$,
by a similar argument to the one used above, this time using~\eqref{eq:minindex}. 
Then~\eqref{eq:effdimboundtrunc} follows just as~\eqref{eq:effdimboundsuper} did. 
\end{proof}

\subsection{Tractability and product weights}
Here we look at product weights 
of the form $\gamma_u = \prod_{j\in u}\gamma_j$
for monotone values $\gamma_j\ge\gamma_{j+1}$ for $j\ge1$,
including $\gamma_\emptyset=1$. 
Sloan and Wo\'zniakowski \cite{sloa:wozn:1998} 
give conditions on the weights for high dimensional quadrature to be tractable,
which we define next. 
They consider a sequence of $d$-dimensional settings 
in which $d\to\infty$. We will look at their weights 
restricted to $u\subseteq1{:}d$. 
We draw on the summary of tractability results given by~\cite{kuo:schw:sloa:2011}. 

Suppose that $f\in\cb(\bsgamma,\rho)$, for $0<\rho<\infty$. 
If we had to pick an $n=0$ point rule for functions in $\cb(\bsgamma,\rho)$
it would be a constant and we minimize worst case error 
by taking that constant to be $0$. 
Our initial error is then 
$\sup_{f\in\cb(\bsgamma,\rho)} \bigl| \int f(\bsx)\rd\bsx\bigr|.$
Now let $n=n_\bsgamma(\varepsilon,d)$ be the smallest integer for 
which some QMC rule reduces the initial error by a factor of 
$\varepsilon$. This $n$ does not depend on our choice $\rho$. 

The problem of quadrature is defined to be tractable if 
there exist points $\bsx_i$ with $n_\bsgamma(\varepsilon,d) \le Cd^q\varepsilon^{-p}$ 
for non-negative constants $C$, $p$ and $q$. 
If $q=0$ is possible, then the cost $n$ can be taken independent 
of dimension $d$, and the problem is then said to be strongly tractable. 
The problem of quadrature is strongly tractable if $\sum_{j=1}^\infty \gamma_j<\infty$
\cite{sloa:wozn:1998}. 
That result was nonconstructive and it had $p=2$, 
comparable to plain Monte Carlo. 
Hickernell and Wozniakowski
\cite{hick:wozn:2000} gave an improved non-constructive 
proof showing errors $n^{-1+\delta}$ are possible 
(so $p=1/(1-\delta)$) if 
\begin{align}\label{eq:strongertract}
\sum_{j=1}^\infty\gamma_j^{1/2}<\infty. 
\end{align}
Constructions attaining those rates have been found \cite{sloa:kuo:joe:2002,nuyu:cool:2006,nuyu:cool:2006:nonp}.

Now suppose that an error of $O(n^{-1+\delta})$ is attainable by QMC 
in a ball $\cb$ where MC would have root mean square 
error $O(n^{-1/2})$.  If that ball contains functions with significantly 
large high dimensional interactions then the QMC method must 
be successfully integrating some high dimensional functions.  If 
the ball has no functions with high dimensional components 
then the success of QMC is attributable to its performance on low 
dimensional functions.


\subsection{Effective dimension}
We consider weight factors of the 
form $\gamma_j = j^{-\eta}$ for various $\eta$. 
Some `phase transitions' happen at special values of $\eta$.
For any $\eta>2$,
\eqref{eq:strongertract} holds and then  
there exist QMC points with worst case errors  
that decrease at the rate $O(n^{-1+\delta})$ for any $\delta>0$.  
For any $\eta>1$, strong tractability holds but not at  
a better rate than Monte Carlo provides. 
These weight choices always have $\gamma_1=1$ as \cite{sloa:wozn:1998} do. 
Taking $\gamma_1>1$ would yield $\gamma_{\{1\}}>\gamma_\emptyset$
which is incompatible with the customary rule that 
smaller sets get higher weight. 

From Theorem~\ref{thm:effdimbounds} we see that 
the truncation dimension satisfies
\begin{align}\label{eq:stbound}
s_T(\varepsilon) \le \max\{ s\ge 1\mid s^{-\eta}\ge 
\varepsilon\}
\end{align}
and the superposition dimension satisfies
\begin{align}\label{eq:ssbound}
s_S(\varepsilon)\le\max\{ s\ge 1\mid (s!)^{-\eta}\ge \pi^{2(s-1)}\varepsilon\}. 
\end{align}
Both of these effective dimensions are non-increasing 
with $\eta$. 

\begin{lemma}\label{lem:dimrate}
For $\eta>0$ the dimensions given in equations~\eqref{eq:stbound}
and~\eqref{eq:ssbound} satisfy
$s_T \le \varepsilon^{-1/\eta}$
and $s_S = O( \log(1/\varepsilon)/\log(\log(1/\varepsilon)))$ as $\varepsilon\to0$.
\end{lemma}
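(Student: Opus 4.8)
The plan is to treat the two claims separately, since the truncation bound is immediate and essentially all of the content is in the superposition bound. For $s_T$, I would just unwind the definition in \eqref{eq:stbound}: the condition $s^{-\eta}\ge\varepsilon$ is equivalent to $s\le\varepsilon^{-1/\eta}$, so the largest integer $s$ satisfying it is $\lfloor\varepsilon^{-1/\eta}\rfloor\le\varepsilon^{-1/\eta}$, which gives $s_T\le\varepsilon^{-1/\eta}$ with no asymptotics needed.

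For $s_S$, I would take logarithms in the defining inequality $(s!)^{-\eta}\ge\pi^{2(s-1)}\varepsilon$ of \eqref{eq:ssbound}, turning it into $\eta\log(s!)+2(s-1)\log\pi\le\log(1/\varepsilon)$. Writing $L=\log(1/\varepsilon)$ and using the elementary lower bound $\log(s!)=\sum_{k=2}^s\log k\ge\int_1^s\log t\rd t=s\log s-s+1$, I would rearrange, crudely bounding $(s-1)(\eta-2\log\pi)\le\eta s$, to reach $s(\log s-1)\le L/\eta$. For $s$ beyond a fixed threshold (say $s\ge 8$, so that $\log s-1\ge\tfrac12\log s$), this yields $s\log s\le 2L/\eta$; for $s$ below that threshold the bound $s_S=O(1)$ is already subsumed in $O(L/\log L)$ since $L/\log L\to\infty$.

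The remaining, mildly delicate, step is the standard inversion "$s\log s\le M\implies s=O(M/\log M)$ as $M\to\infty$", which I would prove by cases on whether $s\ge\sqrt M$: if so, $\log s\ge\tfrac12\log M$ forces $s\le 2M/\log M$; if not, $s<\sqrt M$, which is itself $o(M/\log M)$. Applying this with $M=2L/\eta$ and noting $\log M=\log L+O(1)\sim\log\log(1/\varepsilon)$ gives $s_S=O(L/\log L)=O(\log(1/\varepsilon)/\log\log(1/\varepsilon))$; all the threshold conditions on $L$ (equivalently $M$) hold once $\varepsilon$ is small enough, which is exactly what the $\varepsilon\to0$ in the statement permits. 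The only real care needed is making sure the lower-order $-s$ and $2(s-1)\log\pi$ terms in the logarithmic inequality are genuinely absorbed into the leading $\eta s\log s$ term and do not perturb the rate — once that is checked, the inversion lemma does the rest.
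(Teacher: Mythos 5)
Your argument is correct, and its overall strategy matches the paper's: the truncation bound is immediate from $s_T^{-\eta}\ge\varepsilon$, and for the superposition bound both you and the paper reduce the defining inequality of~\eqref{eq:ssbound} to a bound of the form $s\log s\lesssim\log(1/\varepsilon)$ and then invert. The differences lie in the tools. The paper simply discards the factor $\pi^{2(s-1)}\ge1$, lower-bounds the factorial by $s!\ge s^{\lambda s}$ for an arbitrary $\lambda\in(0,1)$ once $s$ is large, and inverts $s\log s\le\log(1/\varepsilon)/(\lambda\eta)$ by invoking the Lambert $W$ function together with its asymptotic $W(x)\sim\log x$. You instead keep the $\pi^{2(s-1)}$ term, use the integral bound $\log(s!)\ge s\log s-s+1$, and absorb the lower-order terms via $(s-1)(\eta-2\log\pi)\le\eta s$ — a step that is valid for every $\eta>0$ whatever the sign of $\eta-2\log\pi$ — before finishing with the elementary two-case inversion ($s\ge\sqrt M$ versus $s<\sqrt M$), which correctly handles the small-$s$ regime by noting it is $O(1)$. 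What your route buys is a fully self-contained proof: no appeal to Lambert-$W$ asymptotics and no ``for $s$ large enough'' threshold tied to an auxiliary parameter $\lambda$. What the paper's route buys is sharper constants: the $\lambda$-parametrization yields the refined statement following the lemma, namely that $s_S(\varepsilon)$ is asymptotically at most $\log(1/\varepsilon)/(\eta\lambda)$ divided by $\log\bigl(\log(1/\varepsilon)/(\eta\lambda)\bigr)$ for $\lambda$ arbitrarily close to $1$, whereas your crude absorption and the factor $2$ in $s\log s\le 2L/\eta$ deliver only the $O(\cdot)$ rate — which, to be fair, is all the lemma asserts.
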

\begin{proof}
The first claim is immediate because $s_T^{-\eta}\ge\varepsilon$.
For the second claim, $s_S$ satisfies $(s!)^{-\eta}\ge\varepsilon$.
For any $\lambda\in(0,1)$ and 
very small $\varepsilon$ we get $s$ large enough that
$s!\ge s^{\lambda s}$. Then $\varepsilon^{-1/\eta}\ge s^{\lambda s} = e^{\lambda s\log(s)}$
and so $s\log(s)\le \log(\varepsilon^{-1})/(\lambda\eta)$.
Then $w\le \log(A)/W(A)$ where $A = \log(\varepsilon^{-1})/(\lambda\eta)$
and $W$ is the principal branch of the Lambert function that solves $W(x)e^{W(x)}=x$.
To complete the proof, we note that $W(x)$ is asymptotic to $\log(x)$
as $x\to\infty$, from \cite{corless1996lambertw}.
\end{proof}

The convergence rate in Lemma~\ref{lem:dimrate}
is the same as in Lemma 1 of \cite{plaskota2011tractability} for
$d(\varepsilon)$ in MDM.
From the proof of Lemma~\ref{lem:dimrate} we find that
$$
\lim_{\varepsilon\to0} 
s_S(\varepsilon) \Bigm/
\frac{\log(1/\varepsilon)/(\eta\lambda)}{\log(\log(1/\varepsilon)/(\eta\lambda))}
\le 1
$$
for any $\lambda\in(0,1)$.


\begin{table}
\centering 
\caption{\label{tab:effdim}
Upper bounds on effective dimension for product weights 
defined by $\gamma_j=j^{-\eta}$. 
}
\begin{tabular}{lcccccc}
\toprule 
&\multicolumn{3}{c}{Truncation}&\multicolumn{3}{c}{Superposition}\\
$\varepsilon$ & $\eta=2$ & $\eta=1$ & $\eta=0$& $\eta=2$ & $\eta=1$ & $\eta=0$\\
\midrule 
$0.1$ & \phz3&\phz\phz\phz10&$\infty$ &1&1&2\\
$0.01$ & \phz9 &\phz\phz100 &$\infty$ &2&2&3\\
$0.001$ & 31 & \phz 1000 &$\infty$ &2&3&4\\
$0.0001$ & 99& 10000 &$\infty$ &3&3&5\\
\bottomrule 
\end{tabular}
\end{table}


The effective dimensions in both the truncation 
and superposition senses for product weights 
with $\eta\in\{0,1,2\}$ are given in Table~\ref{tab:effdim}. 
These values are identical to those in 
\cite{effdim-tr} under a periodicity constraint 
except that the truncation dimension for $\varepsilon=10^{-4}$
given there is $101$ instead of $100$. In this case,
equality holds in~\eqref{eq:effdimboundtrunc} for $\eta=2$ and $s=100$. 

Theorem~\ref{thm:effdimbounds} gives an upper bound on effective
dimension.  It is then possible that the bounds in Table~\ref{tab:effdim} are too high.  
In the case of the superposition dimensions there is
not much room to lower them, so the bounds in the theorem could well
be giving exact values for some $\varepsilon$.

For weights given by $\eta=2$ and using $\varepsilon = 0.01$ we find that 
the truncation dimension is at most $9$ and the superposition dimension 
is at most $2$.   When $\eta=0$, the truncation dimension is unbounded. 
For instance, an integrand of variance one depending only on $x_j$
would be inside the unit ball. Since there is no a priori upper bound on $j$
we get $\infty$ for the truncation dimension. 
It is surprising that the superposition dimension is not very large 
for $\eta=0$. For $\eta=0$, all of the weights are $\gamma_u=1$. 
Decreasing weights are commonly motivated by the reduced importance
that they give to subsets of higher cardinality, but here we see
modest truncation dimension even with constant weights.

\subsection{Most important interactions}

For $\eta=2$ and $\varepsilon =0.01$, we find that the 
only large variance components involve only 
one or two of the first $9$ input variables.  We can also investigate 
which of the two factor interactions could be large. 

Using Proposition~\ref{prop:sigmabound} we can bound $\sigma^2_u$. 
For product weights with $\gamma_j=j^{-2}$ we get 
$\sigma^2_{\{1,2\}} \le \pi^{-2}/4\doteq0.025$ and 
$\sigma^2_{\{1,3\}} \le \pi^{-2}/9\doteq 0.011$. 
The other two variable interactions have 
bounds below $0.01$ as do interactions of order three and up. 

If we lower the threshold to $\varepsilon=0.001$, then 
$\sigma^2_u$ for $u=\{1,j\}$ and $2\le j\le10$ are potentially 
this large as are those for $u=\{2,j\}$ for $j=3,4,5$, but 
$\sigma^2_u<\varepsilon$ for all $|u|\ge3$. 

For $\eta=1$, the sets $u$ where 
the upper bound on $\sigma^2_u$ is below 
$\varepsilon=0.01$ are singletons $\{1\}$ to $\{100\}$
and the same two factor interactions which meet 
the $\varepsilon=0.001$ criterion for $\eta=2$. 

These or very similar subsets were found independently 
to be important in anchored spaces by Greg Wasilkowski 
who presented them at a SAMSI workshop on QMC. 
See also \cite{gilbert2017small}.

\section{Conclusions}\label{sec:conclusions}

The effectiveness of QMC sampling on nominally high 
dimensional integrands can be explained in part by those specific integrands 
having a low effective dimension \cite{cafmowen} as measured by ANOVA. 
It is only partial because ANOVA components are not necessarily smooth 
enough for QMC to be beneficial. 
It is however common for the process by which ANOVA components are 
defined to make the low order components smooth 
\cite{griebel2010smoothing}. 

This paper considers effective dimension of weighted Sobolev spaces 
without requiring periodicity of the integrands. 
Some weighted Sobolev spaces describe families of integrands over which QMC 
has uniformly good performance. 
A ball in such a space just barely large enough 
to contain an integrand of unit variance, will contain no integrands 
with meaningfully large high dimensional or high index variance components. 
Thus algorithms for integration in these settings ought to 
focus on certain low dimensional aspects of the input space. 


It is surprising that weighted spaces 
with all $\gamma_u=1$ (which does not allow tractability) 
leads to spaces with modest superposition 
dimension.   In this sense, $\gamma_u=1$ is not a model for a situation 
where all subsets of variables are equally important.  

It would be interesting to find connections or bounds between the
superposition dimension studied here and the dimension $d(\varepsilon)$
used in MDM.  In the case of product weights, both methods have to
discount high order subsets.  The MDM literature uses a tail
quantity $\sum_{j>d}\gamma_j$.  The superposition dimension here
uses $\prod_{j=1}^{d+1}\gamma_{j}$ and the 
truncation dimension uses $\gamma_{d+1}$
It may be possible to derive bounds connecting these quantities
using embeddings.

Up to this point, we have emphasized functions of low effective dimension. 
It is important to consider what happens if the integrand $f$ at hand is not 
dominated by its low dimensional components. 
If $f$ is in the unit ball in one of the weighted Sobolev spaces, then 
a modestly large $n$ can be found which will yield an integration error 
smaller than $\varepsilon$ for this $f$ and all other integrands in that ball. 
Even if $f$ is not in that ball, $\bar f = f/\Vert f\Vert_\bsgamma$ is in 
that ball, and we can be sure of an error below $\varepsilon$ for $\bar f$. 
Using MC and QMC methods, this means we have an error 
below $\varepsilon\Vert f\Vert_\bsgamma$ for $f$. 
If $\Vert f\Vert_\bsgamma$ is very large then we need a very small $\varepsilon$
to compensate.  
A function such as $f_d = \prod_{j=1}^d(x_j-1/2)$ makes a good test case. 
For product weights $\Vert f_d\Vert=(d!)^{\eta/2}$, and so for good results 
we would need $\varepsilon$ to be comparable to $(d!)^{-\eta/2}$, which 
then requires $n$ to be a power of $d!$.




\section*{Acknowledgements}
I thank the following people for helpful discussions:
Sergei Kucherenko, Josef Dick, and Fred Hickernell. 
Greg Wasilkowski made a good suggestion about 
formulating the definition of subspace dimension that 
was incorporated into \cite{effdim-tr}. 
Frances Kuo shared an early version of~\cite{kuo:schw:sloa:2011}. 
I thank Jiangming Xiang for catching an error in a previous version of this paper.
I also would like to acknowledge SAMSI (NSF, DMS-1638521), 
whose QMC workshop came as I was completing this document. 
While there, I had additional informative conversations 
about tractability, initial error, and normalization in the Poincar\'e inequailty 
with Ian Sloan, Fred Hickernell, Greg Wasilkowski and Henryk Wo\'zniakowski. 
Finally, I thank the reviewers of this paper for many helpful comments.

\bibliographystyle{siamplain}
\bibliography{effdim}
\end{document}